\newtheorem{theorem}{Theorem}[section]
\newtheorem{proposition}[theorem]{Proposition}
\newtheorem{remark}[theorem]{Remark}
\date{}
\title{Eigenvector convergence for minors of unitarily invariant infinite random matrices}
\author{
        Joseph \textsc{Najnudel}     \footnote{\texttt{joseph.najnudel@uc.edu}}
       } 
\begin{document}
\maketitle
\begin{abstract} 
In \cite{Pic91}, Pickrell fully characterizes  the unitarily invariant probability measures on infinite Hermitian matrices. An alternative proof of this classification is given by Olshanski and Vershik in 
 \cite{OV96}, and in \cite{BO01}, Borodin and Olshanski  deduce from this proof that under any of these invariant measures, the extreme eigenvalues of the minors, divided by the dimension, converge almost surely. In this paper, we prove that one also has a weak convergence for the eigenvectors, in a sense which is made precise. 
After mapping Hermitian to unitary matrices via the Cayley transform, our result extends a convergence proven in our paper with Maples and Nikeghbali \cite{MNN18}, for which a coupling of the Circular Unitary Ensemble of all dimensions is considered. 
\end{abstract} 
\section*{Introduction}
Let $\mathcal{H}$ be the set of infinite Hermitian matrices, i.e. infinite families $(m_{j,k})_{j,k \geq 1}$ of complex numbers 
such that $m_{j,k} = \overline{m_{k,j}}$, and $\mathcal{U}$ the group of infinite unitary matrices, i.e. matrices
$(u_{j,k})_{j,k \geq 1}$ such that there exists $n \geq 1$ satisfying the following property: $(u_{j,k})_{1 \leq j,k \leq n}$ is a unitary matrix and $u_{j,k} = \delta_{j,k} := \mathds{1}_{j = k}$ if $j$ or $k$ is strictly larger than $n$.  The group $\mathcal{U}$ can be considered as the union of $(U(n))_{n  \geq 1}$, where $U(n)$ is naturally embedded in $U(n+1)$
by the map $u \mapsto \operatorname{Diag} (u,1)$. 
The group $\mathcal{U}$ naturally acts on $\mathcal{H}$ by conjugation, and some probability measures on $\mathcal{H}$ are invariant by this action: they are called {\it central measures}. After a similar study, by Aldous \cite{A81}, of infinite random matrices which are invariant by left and right multiplication by permutation or orthogonal matrices, the central measures on $\mathcal{H}$ have been completely classified 
by Pickrell \cite{Pic91}, by Olshanski and Vershik in \cite{OV96}, and can be decomposed as convex combinations of measures called {\it ergodic measures}. The ergodic measures are indexed by the set $\mathbb{R} \times \mathbb{R}_+ \times \mathcal{S}$, where $\mathcal{S}$ contains all  square-summable sets of non-zero real numbers with possible repetitions. 
Moreover, in \cite{BO01}, Borodin and Olshanki show that these points correspond to almost sure limits of the extremal eigenvalues of the minors of the corresponding  infinite matrix, divided by their dimension. 
In gerenal, a central measure is then represented by a probability distribution on  $\mathbb{R} \times \mathbb{R}_+ \times \mathcal{S}$. This distribution has been studied in detail for some particular central measures, which enjoy remarkable properties. For example, Borodin and Olshanski \cite{BO01} have studied the case of the {\it Hua-Pickrell measures}, which depend on a complex parameter $s$ with real part strictly larger than $-1/2$, and under which the 
distribution of the minor $M_n := (m_{j,k})_{1 \leq j, k \leq n}$ has a density proportional to 
$\operatorname{det} (1 + iM_n)^{-s-n}  (1 - iM_n)^{- \bar{s} -n} )$ with respect to the Lebesgue measure on Hermitian matrices. Borodin and Olshanki proved that under the probability measure on $\mathbb{R} \times \mathbb{R}_+ \times \mathcal{S}$ associated to the Hua-Pickrell measure of parameter $s$, the third component (in $\mathcal{S}$) is a determinantal point process whose kernel, depending on $s$,  is explicitly computed. For $s = 0$, we get the inverses of the points of a determinantal sine-kernel process. 
The authors also show that the second component (called {\it Gaussian component}) is equal to zero for $s = 0$, and Qiu \cite{Qiu17} shows that it is the case for all $s$. He also determines the first component for $s \in \mathbb{R}$. 

The  case $s = 0$, which corresponds to minors following the Cauchy Ensemble, is particularly interesting for the following reason. The Cayley transform $x \mapsto (x - i)/(x + i)$ maps the Hermitian matrices to the unitary matrices for which $1$ is not an eigenvalue. The sequences of minors of infinite Hermitian matrices are mapped to some particular sequences of unitary matrices of increasing dimensions, called {\it virtual isometries}, and characterized by Neretin in \cite{Neretin02}. 
These virtual isometries defined by Neretin correspond to unitary matrices for which $1$ is not an eigenvalue: this constraint has been removed in a construction done in a joint paper with Bourgade and Nikeghbali \cite{BNN13}, which generalizes 
the construction of Neretin. Our notion of virtual isometry also generalizes the notation of {\it virtual permutations} introduced by Kerov, Olshanski and Vershik in \cite{KOV93} and studied in detail by Tsilevich \cite{Tsilevich98}, who gives a classification of the central measures on virtual permutations which is quite similar to the classification given by Olshanski and Vershik in the Hermitian setting. 
If we map an infinite Hermitian matrix following the Hua-Pickrell measure for $s = 0$ by the Cayley transform, we get a virtual isometry such that each component follows the Circular Unitary Ensemble, i.e. the Haar measure on the unitary group. The convergence results proven in \cite{OV96} and \cite{BO01} imply the following: if for a virtual isometry $(u_n)_{n \geq 1}$ following the Haar measure, we consider the eigenangles of $u_n$, multiplied by $n/2 \pi$, then the corresponding point measure a.s. converges locally weakly to a determinantal sine-kernel process. In \cite{BNN13}, we give an alternative and more direct proof of this result, with an estimate of the speed of convergence. 

In \cite{MNN13} and \cite{MNN18}, we improve this estimate, and we also show that each fixed component of the eigenvectors, multiplied by $\sqrt{n}$, almost surely converges to a non-trivial limit when $n$ goes to infinity. In \cite{MNN13}, we construct an operator $H$ on an infinite dimensional space, whose eigenvalues and eigenvectors are the limits of the renormalized eigenangles and eigenvectors of $(u_n)_{n \geq 1}$. 
The space $\mathcal{E}$ where the operator $H$ is defined is spanned by independent infinite sequences of complex i.i.d. Gaussian variables, and its structure is not classical: in particular, it is not a Hilbert space. The flow $(e^{iH \alpha})_{\alpha \in \mathbb{R}}$ of operators on $\mathcal{E}$ can then be seen as a limit, in a sense to be made precise, of 
the family $(u_n^{\lfloor \alpha n \rfloor})_{n \geq 1}$ of unitary matrices, when $n$ goes to infinity. 

The construction in \cite{MNN13} is, to our knowledge, the first natural construction of an operator whose spectrum is a determinantal sine-kernel process, and which is related to a classical ensemble of random matrices. A different construction of such an operator has been later given by Valk\'o and Vir\'ag in \cite{VV17}. Note that it is natural to expect that the sine-kernel process is the spectrum of some kind of universal random operator, since it appears as a limit for the spectrum of many matrix ensembles: however, until now, our attempts to construct an operator which is more universal  (i.e. related to many  random matrix ensembles)  than those given in \cite{MNN13} and \cite{VV17} have not succeeded (the operator in \cite{MNN13} is only related to the Circular Unitary Ensemble, and the operator in \cite{VV17} is related to ensembles of tridiagonal matrices). The construction of operators whose spectrum is the sine-kernel process might also be, even if this is very speculative, related to the conjecture of Hilbert and P\'olya, who suggested that the non-trivial zeros of the Riemann zeta functions should be interpreted as the spectrum of an operator $\frac12 + iH$ with $H$ a Hermitian operator. Indeed, the zeros of $\zeta$ are believed to locally behave like a determinantal sine-kernel process, as deduced from a conjecture by Montgomery \cite{Montgomery73}, generalized by Rudnick and Sarnak \cite{RS96}. More detail on this discussion can be found in \cite{MNN13}. 

The main goal of the present paper is the generalization of the result of convergence of eigenvectors given in \cite{MNN13} and \cite{MNN18}: we will show that this convergence occurs for any random infinite Hermitian matrix following a central measure, or equivalently (by using the Cayley transform which preserves the eigenvectors), for any random virtual isometry in the sense of \cite{Neretin02}, whose distribution is invariant by unitary conjugation.

\section{Classification of the central measures and statement of our main result}
In this section, we first recall the classification of the central measures on infinite Hermitian matrices given by Pickrell \cite{Pic91}, and by Olshanski and Vershik \cite{OV96}.  A reformulation of this classification is given by the following proposition:
\begin{proposition} \label{propositionOV96}
Let $\mathbb{P}$ be a central probability measure on the space of infinite Hermitian matrices. Then, 
there exists a probability measure $\mu$ on $\mathbb{R} \times \mathbb{R}_+ \times \mathcal{S}$, such that 
$$\mathbb{P} = \int_{\mathbb{R} \times \mathbb{R}_+ \times \mathcal{S}} \mathbb{P}^{(\alpha)} d \mu(  \alpha),$$
where  $\mathbb{P}^{(\alpha)}$ is defined as follows.
For $\gamma_1 \in \mathbb{R}$, $\gamma_2 \in \mathbb{R}_+$, and a square-summable, finite or infinite, sequence $(x_\ell)_{\ell \geq 1}$ of non-zero real numbers,
let the infinite matrix $M = (m_{j,k})_{j,k \geq 1}$ be given by 
$$m_{j,k} = \gamma_1 \delta_{j,k}  + \sqrt{\gamma_2} G_{j,k} + 
\sum_{\ell \geq 1} x_{\ell} ( \xi^{(\ell)}_{j}  \overline{\xi^{(\ell)}_{k}} - \delta_{j,k}),$$
where $(G_{j,k})_{j,k \geq 1}$ is an infinite matrix following the Gaussian Unitary Ensemble (normalized in order to have 
$\mathbb{E} [ G_{j,j}^2 ] = 1$) and
 $(\xi^{(\ell)}_{j}  )_{\ell, j \geq 1}$ is an independent family of i.i.d. complex Gaussian variables, such that 
$\mathbb{E} [ \xi^{(\ell)}_{j}] = \mathbb{E} [ (\xi^{(\ell)}_{j})^2] = 0$, 
$\mathbb{E} [ |\xi^{(\ell)}_{j}|^2] = 1$. Then, $M$ follows the distribution $\mathbb{P}^{(\alpha)}$ where 
$$\alpha = (\gamma_1, \gamma_2, \{x_{\ell}, \ell \geq 1\}).$$
\end{proposition}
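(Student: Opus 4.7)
The proposition is a concrete reformulation of the Pickrell--Olshanski--Vershik classification, so my plan is to invoke the ergodic decomposition established in \cite{Pic91, OV96} and then identify each ergodic central measure $\mathbb{Q}^{(\alpha)}$, indexed by $\alpha \in \mathbb{R} \times \mathbb{R}_+ \times \mathcal{S}$, with the explicit measure $\mathbb{P}^{(\alpha)}$ constructed in the statement. Granted this identification, the integral representation is just a translation of the abstract Choquet-type decomposition of the simplex of central measures.

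Before the identification, I would check that $\mathbb{P}^{(\alpha)}$ is well-defined. For each $(j,k)$, the summands $x_\ell(\xi^{(\ell)}_j \overline{\xi^{(\ell)}_k} - \delta_{j,k})$ are independent with mean zero and variance bounded by a constant multiple of $x_\ell^2$, so the hypothesis $\sum_\ell x_\ell^2 < \infty$ combined with Kolmogorov's two-series theorem gives almost sure and $L^2$ convergence of the series defining $m_{j,k}$. Centrality, $UMU^* \stackrel{d}{=} M$ for every $U \in \mathcal{U}$, then follows from three invariance ingredients: $\gamma_1 I$ commutes with $U$; the GUE distribution is conjugation-invariant by construction; and each infinite vector $\xi^{(\ell)}$ has the unitarily invariant law $U\xi^{(\ell)} \stackrel{d}{=} \xi^{(\ell)}$, so $U(\xi^{(\ell)}(\xi^{(\ell)})^* - I)U^* \stackrel{d}{=} \xi^{(\ell)}(\xi^{(\ell)})^* - I$, jointly in $\ell$ by independence.

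The main obstacle is the identification step. My strategy is to use the recovery principle from \cite{OV96, BO01}: under $\mathbb{Q}^{(\alpha)}$ the parameter $\alpha$ can be read off almost surely from the spectral asymptotics of the minors $M_n$ (the extreme eigenvalues of $M_n/n$ deliver $\gamma_1$ and the $x_\ell$'s, while the bulk fluctuations deliver $\gamma_2$), so $\alpha$ is a measurable function of $M$. A direct computation under $\mathbb{P}^{(\alpha)}$ should yield the same deterministic limits. Applying the ergodic decomposition to write $\mathbb{P}^{(\alpha)} = \int \mathbb{Q}^{(\beta)}\, d\nu(\beta)$ and noting that the recovered parameter equals $\beta$ under $\mathbb{Q}^{(\beta)}$ and equals $\alpha$ under $\mathbb{P}^{(\alpha)}$ almost surely forces $\nu = \delta_\alpha$, so $\mathbb{P}^{(\alpha)} = \mathbb{Q}^{(\alpha)}$. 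The most delicate step is the consistency computation matching the normalization of the GUE term and of the rank-one operators $\xi^{(\ell)}(\xi^{(\ell)})^*$ to the spectral limits used in \cite{OV96, BO01}; in particular, one must verify that each pure-point contribution produces precisely one outlier eigenvalue of $M_n/n$ converging to $\gamma_1 + x_\ell$, and that the Gaussian piece produces the prescribed bulk variance $\gamma_2$.
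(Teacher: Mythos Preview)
The paper does not actually prove this proposition. It is stated as a reformulation of the classification due to Pickrell \cite{Pic91} and Olshanski--Vershik \cite{OV96}, and the only justification the paper supplies is the Remark immediately following, which addresses the a.s.\ and $L^2$ convergence of the series defining $m_{j,k}$ via the bounded-$L^2$-martingale argument. Everything else is deferred to the cited references. Your proposal therefore goes well beyond what the paper does: you sketch the standard route (invoke the abstract ergodic decomposition, check that each $\mathbb{P}^{(\alpha)}$ is central, and identify it with the ergodic measure of parameter $\alpha$ by recovering $\alpha$ from spectral asymptotics of the minors). That outline is consistent with how the result is obtained in \cite{OV96, BO01}, so as a strategy it is sound.

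One small correction to your last paragraph: the diagonal shift $\gamma_1 I$ adds $\gamma_1$ to every eigenvalue of $M_n$, hence $\gamma_1/n$ to the eigenvalues of $M_n/n$, and this vanishes in the limit. Thus the outlier eigenvalues of $M_n/n$ converge to $x_\ell$, not to $\gamma_1 + x_\ell$ (the paper makes exactly this observation at the start of Section~2). The parameter $\gamma_1$ is not read off from the extreme eigenvalues of $M_n/n$; in the Olshanski--Vershik scheme it is recovered from a normalized trace, e.g.\ $\operatorname{Tr}(M_n)/n \to \gamma_1$. Apart from this slip, your identification argument is the right one.
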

\begin{remark}
The space $\mathcal{S}$ is endowed by the $\sigma$-algebra generated by the topology of weak convergence of point measures on compact sets of $\mathbb{R}^*$. If the sequence $(x_{\ell})_{\ell \geq 1}$ is infinite, then 
the infinite sum defining $m_{j,k}$ is convergent almost surely and in $L^2$, since the partial sums form a martingale which is bounded in $L^2$ (because of the square-summability of $(x_{\ell})_{\ell \geq 1}$). 
\end{remark}
In \cite{OV96} and \cite{BO01}, Borodin, Olshanski and Vershik show that under 
$\mathbb{P}^{(\alpha)}$ with $\alpha$ given just above, the extremal eigenvalues of the minors of $M$, divided by the dimension of the minors, tend to the points $(x_{\ell})_{\ell \geq 1}$. 
We will show a similar result of convergence for the coordinates of the eigenvectors. For example, if $x_{\ell}$ is the unique $\ell$-th largest point of the sequence $(x_{\ell})_{\ell \geq 1}$ and if $x_{\ell} > 0$, then each component of the eigenvector associated to the $\ell$-th largest eigenvalue, properly renormalized, converges to the corresponding component of the infinite sequence $(\xi^{(\ell)}_k)_{k \geq 1}$.
In order to state the result in full generality, it is not very convenient to directly consider eigenvectors, for the following reasons: 
\begin{itemize}
\item The normalization of the eigenvector depends on the arbitrary choice of a phase. 
\item If the sequence $(x_{\ell})_{\ell \geq 1}$ contains multiple points, then several eigenvalues tend to the same limit after dividing by the dimension of the minors, and the convergence of the individual eigenvectors is no longer true in general.  
\end{itemize}
A good way to avoid the first problem is to replace the eigenvectors by the corresponding matrices of  orthogonal projections, which are uniquely determined. The order of magnitude of the entries of these projections is $1/n$ (the trace is equal to $1$ if the eigenvalues are simple): hence, in order to get a possible convergence, it is natural to multiplies these entries by $n$. In order to solve the problem of "fusion of the eigenspaces" at the limit when $(x_{\ell})_{\ell \geq 1}$ has multiple points, we will consider the spectral projection-valued measures defined just below, instead of the individual 
 projections on eigenspaces.
\begin{theorem} \label{convergenceeigenvectors}
Let $\mathbb{P}$ be a central probability measure on the space of infinite Hermitian matrices, and let $M = (m_{j,k})_{j,k \geq 1}$ be an infinite matrix following the distribution $\mathbb{P}$.
Then, the random  measure
$$\Lambda_n :=  \sum_{\lambda \in \operatorname{Spec}(M_n)} m(\lambda) \delta_{\lambda/n}$$
where $M_n$ is the top-left $n \times n$ minor of $M$,  $m(\lambda)$ the multiplicity of the eigenvalue $\lambda$, and $\delta_{\lambda/n}$ the Dirac  measure at $\lambda/n$,
 converges almost surely  to a limiting atomic measure $\Lambda_{\infty}$, with finitely many atoms on $\mathbb{R} \backslash (-\epsilon, \epsilon)$ for all $\epsilon > 0$, in the following sense: for all intervals $I$ included in $\mathbb{R}_+$ or $\mathbb{R}_-$, whose boundary does not contain zero or a point of the support of $\Lambda_{\infty}$,
$$ \Lambda_{n} (I) \underset{n \rightarrow \infty}{\longrightarrow} \Lambda_{\infty} (I).$$
Moreover, for $a, b \geq 1$ and $n \geq \max(a,b)$, 
if we define the random complex measure 
$$\Sigma_{n,a,b} := \sum_{\lambda \in \operatorname{Spec}(M_n)} n ( \Pi_{M_n, \lambda})_{a,b} \, \delta_{\lambda/n},$$
where $(\Pi_{M_n, \lambda})_{a,b}$ is the $a,b$ entry of the matrix of the orthogonal projection on the eigenspace of $M_n$ associated to the eigenvalue $\lambda$, then there a.s. exists an atomic non-zero complex  measure $\Sigma_{\infty,a,b}$, with finitely many atoms on $\mathbb{R} \backslash (-\epsilon, \epsilon)$ for all $\epsilon > 0$, such that $\Sigma_{n,a,b}$ converges to $\Sigma_{\infty,a,b}$ in the  same sense as before. 
Moreover, if, with the notation of Proposition \ref{propositionOV96}, 
\begin{equation} \label{formulaOV}
m_{j,k} = \gamma_1 \delta_{j,k}  + \sqrt{\gamma_2} G_{j,k} + 
\sum_{\ell \geq 1} x_{\ell} ( \xi^{(\ell)}_{j}  \overline{\xi^{(\ell)}_{k}} - \delta_{j,k}),
\end{equation}
then we have
$$\Lambda_{\infty} = \sum_{\ell \geq 1} \delta_{x_{\ell}}$$
and
$$\Sigma_{\infty,a,b} = \sum_{\ell \geq 1}  \xi^{(\ell)}_{a}  \overline{\xi^{(\ell)}_{b}}  \delta_{x_{\ell}}$$
\end{theorem}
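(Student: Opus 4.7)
The plan is to invoke the ergodic decomposition in Proposition \ref{propositionOV96} to reduce to the case $\mathbb{P}=\mathbb{P}^{(\alpha)}$ for a fixed $\alpha=(\gamma_1,\gamma_2,\{x_\ell\})$, so that $M$ is given by the explicit formula (\ref{formulaOV}); almost sure conclusions lift back to $\mathbb{P}$ by integration against $\mu$. The convergence $\Lambda_n\to\Lambda_\infty=\sum_\ell \delta_{x_\ell}$ is then essentially the Borodin--Olshanski theorem on extremal eigenvalues of the minors, so the core of the work is the complex measure $\Sigma_{n,a,b}$. Since $\Sigma_{n,a,b}(I)=n\,(\Pi_{M_n,nI})_{a,b}$, the task reduces to proving that
\begin{equation*}
n\,(\Pi_{M_n,nI})_{a,b}\;\longrightarrow\;\sum_{\ell:\,x_\ell\in I}\xi_a^{(\ell)}\,\overline{\xi_b^{(\ell)}}\qquad\text{a.s.}
\end{equation*}
for every interval $I$ satisfying the hypotheses of the theorem.

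Fix such an $I\subset\mathbb{R}_+$ (the case $\mathbb{R}_-$ is symmetric). The square--summability of $(x_\ell)$ together with the fact that $I$ is bounded away from $0$ implies that $\mathcal{A}:=\{\ell:x_\ell\in I\}$ is finite, of cardinality $r$. Let $V_n$ be the $n\times r$ matrix whose columns are $\xi_n^{(\ell)}:=(\xi_j^{(\ell)})_{1\le j\le n}$ for $\ell\in\mathcal{A}$ and $D=\mathrm{diag}(x_\ell)_{\ell\in\mathcal{A}}$. Write
\begin{equation*}
M_n = V_n D V_n^* + E_n,\qquad E_n=\bigl(\gamma_1-\textstyle\sum_{\ell\in\mathcal{A}}x_\ell\bigr)I_n+\sqrt{\gamma_2}\,G_n+\sum_{\ell\notin\mathcal{A}}x_\ell\bigl(\xi_n^{(\ell)}(\xi_n^{(\ell)})^*-I_n\bigr),
\end{equation*}
so that $E_n$ is Hermitian and, crucially, independent of $V_n$. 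Combining $\|G_n\|=O(\sqrt{n})$ with a Wigner--type bound for the infinite sum (entries of variance $\sum_{\ell\notin\mathcal{A}}x_\ell^2$) and a truncation at level $L$ for which the tail satisfies $\bigl\|\sum_{\ell>L}x_\ell(\xi\xi^*-I)\bigr\|=O\bigl(\sqrt{n\sum_{\ell>L}x_\ell^2}\bigr)$, one gets $\|E_n\|=O(\sqrt{n})$ almost surely. In particular, $\mathrm{Spec}(E_n)$ is separated from $nI$ for $n$ large.

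The main computation uses the contour representation
\begin{equation*}
\Pi_{M_n,nI}=-\frac{1}{2\pi i}\oint_{\Gamma_n}(M_n-zI)^{-1}\,dz,\qquad \Gamma_n=\{n\zeta:\zeta\in\Gamma\},
\end{equation*}
with $\Gamma$ a fixed contour enclosing $\{x_\ell:x_\ell\in I\}$ and no other $x_\ell$ nor the origin, together with Sherman--Morrison--Woodbury:
\begin{equation*}
(M_n-zI)^{-1}=(E_n-zI)^{-1}-(E_n-zI)^{-1}V_n\bigl(D^{-1}+V_n^*(E_n-zI)^{-1}V_n\bigr)^{-1}V_n^*(E_n-zI)^{-1}.
\end{equation*}
The first term contributes $0$ to the integral since $\Gamma_n$ encloses no spectrum of $E_n$. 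For the second, conditioning on $E_n$ and applying Hanson--Wright concentration to the Gaussian bilinear form $\xi^{(\ell)*}(E_n-zI)^{-1}\xi^{(\ell')}$ (with the matrix $(E_n-zI)^{-1}$ of Hilbert--Schmidt norm $O(n^{-1/2})$ and conditional trace $\sim-n/z$) yields, uniformly for $z\in\Gamma_n$,
\begin{equation*}
V_n^*(E_n-zI)^{-1}V_n=-\tfrac{n}{z}\,I_r+O(n^{-1/2}),
\end{equation*}
and entrywise $\bigl((E_n-zI)^{-1}V_n\bigr)_{a,\ell}=-\xi_a^{(\ell)}/z+O(n^{-3/2})$ together with its adjoint. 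Inverting $D^{-1}+V_n^*(E_n-zI)^{-1}V_n\approx\mathrm{diag}\bigl((\zeta-x_\ell)/(x_\ell\zeta)\bigr)$ and plugging everything into the Woodbury formula gives, at the $(a,b)$ entry,
\begin{equation*}
-\sum_{\ell\in\mathcal{A}}\xi_a^{(\ell)}\overline{\xi_b^{(\ell)}}\,\frac{x_\ell}{n^2\zeta(\zeta-x_\ell)}+O(n^{-5/2}),\qquad z=n\zeta,
\end{equation*}
and the change of variable $dz=n\,d\zeta$ together with the residues at $\zeta=x_\ell\in I$ produces $n(\Pi_{M_n,nI})_{a,b}=\sum_{\ell\in\mathcal{A}}\xi_a^{(\ell)}\overline{\xi_b^{(\ell)}}+O(n^{-1/2})$, as required.

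The main obstacle is to make each of the displayed error terms genuinely $o(n^{-1})$ in the relevant entrywise and bilinear senses, so that they still disappear after multiplication by $n$: the naive operator--norm estimates arising from Davis--Kahan give only $O(n^{-1/2})$ and are insufficient. This forces one to work with Hanson--Wright concentration for the random matrix $(E_n-zI)^{-1}$ uniformly in $z\in\Gamma_n$, which is converted to almost sure statements via Borel--Cantelli on a polynomial grid of $z$'s along $\Gamma_n$. A secondary delicate point is the a.s.\ handling of the infinite sum inside $E_n$: using the truncation $E_n=E_n^{(L)}+R_n^{(L)}$ above, one lets $L\to\infty$ after $n\to\infty$ and uses that the operator--norm constant of $R_n^{(L)}$ is $O\bigl(\sqrt{\sum_{\ell>L}x_\ell^2}\bigr)\to 0$. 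The same truncation, applied outside any fixed neighbourhood of the origin, shows that $\Sigma_{\infty,a,b}$ has finitely many atoms there.
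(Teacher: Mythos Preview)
Your contour--integral/Woodbury route is genuinely different from the paper's, but it contains a real error: the claim $\|E_n\|=O(\sqrt{n})$ is false, and so is the related claim $\|R_n^{(L)}\|=O\bigl(\sqrt{n\sum_{\ell>L}x_\ell^2}\bigr)$. The matrix $E_n$ is \emph{not} a Wigner matrix: its entries $\sum_{\ell\notin\mathcal{A}}x_\ell(\xi_j^{(\ell)}\overline{\xi_k^{(\ell)}}-\delta_{j,k})$ are strongly correlated across $(j,k)$ through the shared factors $\xi_j^{(\ell)}$. In fact $E_n$ is itself the $n\times n$ minor of a matrix of the type in Proposition~\ref{propositionOV96}, with parameter set $\{x_\ell:\ell\notin\mathcal{A}\}$, so by the very Borodin--Olshanski eigenvalue convergence you invoke, $\|E_n\|/n\to\max_{\ell\notin\mathcal{A}}|x_\ell|$, which is strictly positive whenever any nonzero $x_\ell$ lies outside $I$ (e.g.\ a negative $x_\ell$ when $I\subset\mathbb{R}_+$). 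Your entrywise expansion $((E_n-zI)^{-1}V_n)_{a,\ell}=-\xi_a^{(\ell)}/z+O(n^{-3/2})$, as justified in your sketch, uses $\|E_n(E_n-zI)^{-1}\|\le\|E_n\|\cdot\|(E_n-zI)^{-1}\|=O(n^{-1/2})$, which collapses once $\|E_n\|\sim n$; the naive bound then gives only $O(n^{-1})$ for that entry, and the error no longer vanishes after multiplying by $n$ and integrating over a contour of length $\sim n$. One can try to repair this using only $\operatorname{dist}(z,\operatorname{Spec}(E_n))\sim n$ together with the unitary invariance of $E_n$ to delocalise its eigenvectors, but this is essentially the theorem applied to $E_n$ and is nowhere indicated in the sketch.

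The paper avoids this trap by a different split $M_n=A_n+B_n$ in which $A_n$ keeps \emph{all} $x_\ell$ with $|x_\ell|>\epsilon$ (finitely many, by square--summability), not only those in $I$; then the preliminary bound gives $\limsup\|B_n\|/n\le(\sum_{|x_\ell|\le\epsilon}x_\ell^2)^{1/2}$, which can be made smaller than any fixed fraction of the gap $\delta$. The finite--rank piece $A_n$ is handled directly by computing its restriction to $\operatorname{Span}(\xi_{[n]}^{(\ell)})$ in that non--orthogonal basis and using the law of large numbers. For the perturbation from $A_n$ to $M_n$, the operator--norm Davis--Kahan bound obtained is only $\|\Pi_{\mathcal{E}}-\Pi_{\mathcal{F}}\|=O(n^{-1/5})$, which, as you correctly anticipate, is too weak entrywise after multiplying by $n$. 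The paper's device here is not Hanson--Wright but the observation that the \emph{law} of $\Pi_{\mathcal{E}}-\Pi_{\mathcal{F}}$ is invariant under unitary conjugation (since $A_n$ and $B_n$ are independent and each unitarily invariant), whence $\Pi_{\mathcal{E}}-\Pi_{\mathcal{F}}\stackrel{d}{=}U\Lambda U^{-1}$ with $U$ Haar on $U(n)$ independent of $\Lambda$; combining $\operatorname{rank}\Lambda\le 2r$, the operator--norm bound on $\Lambda$, and the delocalisation $\sup_{j,k}|u_{j,k}|^2=O(n^{-0.99})$ of Haar entries yields $|(\Pi_{\mathcal{E}}-\Pi_{\mathcal{F}})_{a,b}|=O(n^{-1.19})=o(n^{-1})$.
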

\begin{remark}
It is clear, from Proposition \ref{propositionOV96}, that it is enough to show the theorem when \eqref{formulaOV} holds.
The part of the theorem concerning the convergence of $\Lambda_n$ is in fact already proven in \cite{OV96} and \cite{BO01}. For sake of completeness, we give an alternative proof, together with the convergence of 
$\Sigma_{n,a,b}$. 
Note that the measure $\Sigma_{\infty,a,b}$ is not well-defined on intervals containing zero if the sequence $(x_{\ell})_{\ell \geq 1}$ is infinite. 
\end{remark}
We quite easily deduce the following result from Theorem \ref{convergenceeigenvectors}, which gives the convergence of renormalized eigenvalues and eigenvectors: 
\begin{proposition}
Let us assume that \eqref{formulaOV} occurs. 
For all $r \geq 1$, the $r$-th largest (resp. smallest) eigenvalue of $M_n$ (counted with multiplicity), divided by $n$, converges a.s. to the $r$-th largest (resp. smallest) point of $\{ x_{\ell}, \ell \geq 1\}$ (counted with multiplicity), if this point is positive (resp. negative), and to zero if this point is negative (resp. positive) or does  not exist. 

Let us now assume that $(x_{\ell})_{\ell \geq 1}$ has  a single $r$-th largest (resp. smallest) point  $x_{\ell(r)}$, and that this point is positive (resp. negative). Let $V_n$ be an eigenvector corresponding to the $r$-th largest (resp. smallest) eigenvalue of $M_n$, normalized in such a way that $||V_n|| = \sqrt{n}$ and the first non-zero coordinate of $V_n$ is real and positive. Then, for all $a \geq 1$, the $a$-th coordinate of $V_n$ converges a.s. to 
$ \xi^{(\ell(r))}_{a}  ( |\xi^{(\ell(r))}_{1}| /  \xi^{(\ell(r))}_{1})$ when $n$ goes to infinity. 
\end{proposition}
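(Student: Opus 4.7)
The plan is to read off the proposition directly from Theorem \ref{convergenceeigenvectors}, using the explicit descriptions $\Lambda_\infty=\sum_\ell \delta_{x_\ell}$ and $\Sigma_{\infty,a,b}=\sum_\ell \xi^{(\ell)}_a\overline{\xi^{(\ell)}_b}\,\delta_{x_\ell}$. I first treat the eigenvalues. For any $\epsilon>0$, the set $\{x_\ell\}\cap(\mathbb{R}\setminus(-\epsilon,\epsilon))$ is almost surely finite, and the $x_\ell$'s form a countable set, so arbitrarily small $\epsilon$'s with $\pm\epsilon\notin\{x_\ell\}$ are available. Given $r\geq 1$, suppose first that the $r$-th largest point of $\{x_\ell\}$ equals $y>0$, with multiplicity $m$ in the multiset $\{x_\ell\}$. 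Choose $\epsilon>0$ small enough that $[y-\epsilon,y+\epsilon]\cap\mathrm{supp}(\Lambda_\infty)=\{y\}$ and $y\pm\epsilon\notin\{x_\ell\}$. The theorem yields $\Lambda_n([y-\epsilon,y+\epsilon])\to m$ and $\Lambda_n((y+\epsilon,\infty))\to r-m$, so for $n$ large exactly $r-m$ eigenvalues of $M_n/n$ exceed $y+\epsilon$ and exactly $m$ lie in $[y-\epsilon,y+\epsilon]$; in particular, the $r$-th largest eigenvalue $\lambda^{(r)}_n$ of $M_n$ satisfies $\lambda^{(r)}_n/n\in[y-\epsilon,y+\epsilon]$, and since $\epsilon$ is arbitrary, $\lambda^{(r)}_n/n\to y$.

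If the $r$-th largest point of $\{x_\ell\}$ is negative or does not exist, let $p<r$ be the number of positive $x_\ell$'s. For any admissible $\epsilon>0$, Theorem \ref{convergenceeigenvectors} gives $\Lambda_n((\epsilon,\infty))\to p$ and $\Lambda_n((-\infty,-\epsilon))\to \Lambda_\infty((-\infty,-\epsilon))<\infty$ a.s. Hence eventually at most $p<r$ eigenvalues of $M_n/n$ exceed $\epsilon$, while only boundedly many lie below $-\epsilon$; since $M_n$ has $n$ eigenvalues, the $r$-th largest then satisfies $-\epsilon<\lambda^{(r)}_n/n\leq\epsilon$ for $n$ large, so $\lambda^{(r)}_n/n\to 0$. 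The statement for smallest eigenvalues follows by applying the same reasoning to $-M$, which also satisfies \eqref{formulaOV} after changing signs.

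For the eigenvector part, assume that the $r$-th largest point of $\{x_\ell\}$ is a single point $y=x_{\ell(r)}>0$, so its multiplicity in $\{x_\ell\}$ is $1$. By the first step, for small enough admissible $\epsilon$, $\lambda^{(r)}_n/n$ is the unique eigenvalue of $M_n/n$ lying in $[y-\epsilon,y+\epsilon]$ for $n$ large, and it is simple. Its eigenspace is spanned by $V_n$ with $\|V_n\|^2=n$, so the orthogonal projection equals $V_n V_n^*/n$ and $\Sigma_{n,a,b}([y-\epsilon,y+\epsilon])=(V_n)_a\overline{(V_n)_b}$. Applying Theorem \ref{convergenceeigenvectors} to $\Sigma_{n,a,b}$ on this interval gives
\[
(V_n)_a\,\overline{(V_n)_b}\;\underset{n\to\infty}{\longrightarrow}\;\xi^{(\ell(r))}_a\,\overline{\xi^{(\ell(r))}_b}\qquad\text{a.s.}
\]
for every $a,b\geq 1$. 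Specializing to $a=b=1$ yields $|(V_n)_1|^2\to|\xi^{(\ell(r))}_1|^2>0$ a.s.\ by the non-degeneracy of the Gaussian $\xi^{(\ell(r))}_1$; hence $(V_n)_1\neq 0$ for $n$ large, and the normalization convention forces $(V_n)_1>0$ and $(V_n)_1\to|\xi^{(\ell(r))}_1|$. Taking $b=1$ and dividing by $(V_n)_1$ gives
\[
(V_n)_a\longrightarrow \xi^{(\ell(r))}_a\,\frac{\overline{\xi^{(\ell(r))}_1}}{|\xi^{(\ell(r))}_1|}=\xi^{(\ell(r))}_a\,\frac{|\xi^{(\ell(r))}_1|}{\xi^{(\ell(r))}_1},
\]
using $\bar z/|z|=|z|/z$ for $z\neq 0$.

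The main obstacle is the bookkeeping in the first two steps: one must identify the $r$-th largest eigenvalue among all $n$ atoms of $\Lambda_n$ and rule out the scenario in which microscopic eigenvalues drift upward above the macroscopic ones. This is controlled by the almost sure finiteness of $\Lambda_\infty$ outside any neighborhood of $0$, which keeps the number of eigenvalues of $M_n/n$ with modulus $>\epsilon$ uniformly bounded in $n$ and thereby preserves the ordering. Once this separation is established, the eigenvector statement reduces to the short algebraic manipulation above.
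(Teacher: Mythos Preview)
Your argument is correct and follows essentially the same route as the paper's own proof: deduce the eigenvalue convergence from the convergence of $\Lambda_n$ on half-lines away from the support of $\Lambda_\infty$, then isolate the $r$-th eigenvalue in a small interval around $x_{\ell(r)}$ and read off $(V_n)_a\overline{(V_n)_b}\to\xi^{(\ell(r))}_a\overline{\xi^{(\ell(r))}_b}$ from $\Sigma_{n,a,b}$, finishing with the $a=b=1$ normalization trick. One small bookkeeping slip: when $y$ has multiplicity $m$, the number of points strictly above $y$ need not equal $r-m$ (it is some integer in $\{r-m,\dots,r-1\}$ depending on which copy of $y$ sits in position $r$), but your conclusion that the $r$-th eigenvalue lands in $[y-\epsilon,y+\epsilon]$ follows just the same once this count is corrected.
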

\begin{proof}
Let us assume that the $r$-th largest point of  $(x_{\ell})_{\ell \geq 1}$ is $y > 0$. 
If $z> y > 0$ is sufficiently close to $y$, $z$ is not in the sequence $(x_{\ell})_{\ell \geq 1}$ and 
$\Lambda_{\infty}( [z, \infty)) \leq r -1 $. Hence, by the convergence of $\Lambda_n$, 
$\Lambda_n  ([z, \infty)) < r$ for $n$ large enough and the $r$-th largest eigenvalue of $M_n$ is smaller than 
$n z$. Similarly, if $z \in (0,y)$ is sufficiently close to $y$, $z$ is not in
 $(x_{\ell})_{\ell \geq 1}$ and 
$\Lambda_{\infty}( [z, \infty)) \geq  r  $, which implies $\Lambda_n  ([z, \infty)) > r-1$ for $n$ large enough: the $r$-th largest eigenvalue 
is larger than or equal to $nz$. 

If the $r$-th largest point is negative or does not exist, there is no accumulation of points at the right of $0$, so for $\epsilon > 0$ small enough, $\epsilon$ is not in $(x_{\ell})_{\ell \geq 1}$, and $\Lambda_{\infty}  ([\epsilon, \infty)) \leq  r-1$, 
$\Lambda_n  ([\epsilon, \infty)) < r$ for large $n$, which implies that the $r$-th largest eigenvalue is smaller than $\epsilon n$. On the other hand, there exists $\epsilon > 0$ arbitrarily small such that $-\epsilon$ is not in 
$(x_{\ell})_{\ell \geq 1}$. The quantity $s := \Lambda_{\infty}  ((-\infty, -\epsilon]) $ is finite, 
and for $n$ large, $\Lambda_{n}  ((-\infty, - \epsilon]) < s+1$, which shows that the $(s+1)$-th smallest eigenvalue 
is larger than $- \epsilon n$. A fortiori, it is also the case of the $r$-th largest eigenvalue if $n$ is large. 

We have now proven the convergence of the largest eigenvalues, the proof for the smallest eigenvalue is exactly similar. 

Let us now consider the eigenvectors. Let us assume that  $x_{\ell(r)} > 0$ is the single $r$-th largest point of 
$(x_{\ell})_{\ell \geq 1}$. From the convergence of the eigenvalues, we know that for $0 < z< x_{\ell(r)} < t$, $z$ and $t$ being sufficiently close to $x_{\ell(r)}$, and 
for $n$ large enough depending on $z$ and $t$, the $r$-th largest eigenvalue of $M_n$  is simple and in  $(nz,nt)$, and it is the only eigenvalue in this interval (because the $(r-1)$-th eigenvalue is larger than $nt$ if $r \geq 2$,  and the $(r+1)$-th is smaller than $nz$). We deduce that if $V_n$ is an eigenvector associated to the $r$-th largest eigenvalue of $M_n$,  normalized as in the proposition, 
$$\Sigma_{n,a,b} ((z,t)) = (V_n)_a \overline{(V_n)}_b \underset{n \rightarrow \infty}{\longrightarrow}
\Sigma_{\infty,a,b} ((z,t)) =  \xi^{(\ell(r))}_{a}
\overline{ \xi^{(\ell(r))}_{b} } $$
Taking $ a = b = 1$, we get 
$$|(V_n)_1|^2 \underset{n \rightarrow \infty}{\longrightarrow}  |\xi^{(\ell(r))}_{1}|^2,$$
and then, from the normalization chosen for the phase, 
$$(V_n)_1 \underset{n \rightarrow \infty}{\longrightarrow}  |\xi^{(\ell(r))}_{1}|.$$
Taking $b = 1$ and general $a$, we have 
$$ (V_n)_a \overline{(V_n)}_1 \underset{n \rightarrow \infty}{\longrightarrow}
\xi^{(\ell(r))}_{a} \overline{\xi^{(\ell(r))}_{1}},$$
and then, dividing by the convergence above (the limit being a.s. different from zero), 
$$ (V_n)_a \underset{n \rightarrow \infty}{\longrightarrow}
\xi^{(\ell(r))}_{a} \overline{\xi^{(\ell(r))}_{1}} /  |\xi^{(\ell(r))}_{1}|
=  \xi^{(\ell(r))}_{a}  ( |\xi^{(\ell(r))}_{1}| /  \xi^{(\ell(r))}_{1}).$$

\end{proof}

We will prove Theorem \ref{convergenceeigenvectors} into two steps: we first consider the case where $\gamma_2 = 0$ and the sequence $(x_{\ell})_{\ell \geq 1}$ is finite, and then we deduce the general case. 
\section{Proof of Theorem \ref{convergenceeigenvectors} for finite sequences} 
If $\gamma_2 = 0$ and $(x_\ell)_{\ell \geq 1}$ has a finite number $p$ of elements, then we can write 
$$m_{j,k} = \gamma \delta_{j,k} + \sum_{\ell = 1}^{p} x_{\ell} \xi_j^{(\ell)} \overline{\xi_k^{(\ell)}}.$$
where 
$$\gamma = \gamma_1 - \sum_{\ell = 1}^{p} x_{\ell}.$$
Since the parameter $\gamma$ does not change the eigenvectors of the minors $M_n$ and shifts the eigenvalues by 
$\gamma$, it translates the measures $\Lambda_n$ and  $\Pi_{n,a,b}$ by $\gamma/n$, and then does not change their limiting measures.  

We can then assume 
$$m_{j,k} = \sum_{\ell = 1}^{p} x_{\ell} \xi_j^{(\ell)} \overline{\xi_k^{(\ell)}}.$$

For $n >  p$, the vectors $\xi_{[n]}^{(\ell)} := (\xi_j^{(\ell)} )_{1 \leq j \leq n} $, $ 1 \leq \ell \leq p$ are almost surely linearly independent. 
For any vector $V$ in $\mathbb{C}^n$, 
$$(M_n V)_{j} = \sum_{\ell = 1}^{p} x_{\ell}  \xi_j^{(\ell)}   \sum_{k=1}^n \overline{\xi_k^{(\ell)}} V_k,$$
i.e. 
$$M_n V =  \sum_{\ell = 1}^{p} x_{\ell}  \langle \xi_{[n]}^{(\ell)}, V \rangle  \, \xi_{[n]}^{(\ell)}.$$
We deduce that $E_n := \operatorname{Span} (  \xi_{[n]}^{(\ell)}, 1 \leq \ell \leq p )$, and its orthogonal, are invariant spaces for $M_n$, and that the orthogonal of $E_n$ is in the kernel of $M_n$. 
Hence, we have 
$$\Lambda_n  = (n-p) \delta_0 + \sum_{\lambda \in \operatorname{Spec} (P_n)} m(\lambda) \delta_{\lambda/n},$$
where $P_n$ is the restriction of $M_n$ to $E_n$, 
and 
$$\Sigma_{n,a,b} = n (\Pi_{(E_n)^{\perp}} )_{a,b}\delta_0 + 
 \sum_{\lambda \in \operatorname{Spec} (P_n)} n (\Pi_{P_n, \lambda})_{a,b} \delta_{\lambda/n},$$
where $\Pi_{(E_n)^{\perp}}$ is the orthogonal projection on $(E_n)^{\perp}$ and 
$\Pi_{P_n, \lambda}$ is the orthogonal projection on the eigenspace of $P_n$ corresponding to the eigenvalue $\lambda$. 
Since the convergence of measures defined in Theorem \ref{convergenceeigenvectors} does not involve Dirac masses at zero, it is enough to show that almost surely, 
\begin{equation}\sum_{\lambda \in \operatorname{Spec} (P_n) \cap nI} m(\lambda)
\underset{n \rightarrow \infty}{\longrightarrow} \sum_{\ell = 1}^p \mathds{1}_{x_{\ell} \in I}, \label{convergencesp1}
\end{equation}
and 
\begin{equation}\sum_{\lambda \in \operatorname{Spec} (P_n) \cap nI} n (\Pi_{P_n, \lambda})_{a,b} \, 
\underset{n \rightarrow \infty}{\longrightarrow} \sum_{\ell = 1}^p  \xi^{(\ell)}_{a}  \overline{\xi^{(\ell)}_{b}}  \mathds{1}_{x_{\ell} \in I}, \label{convergencesp2}
\end{equation}
for all intervals $I$ whose boundary does not contain a point in $(x_{\ell})_{\ell \geq 1}$. 
In the first convergence, if we divide by $p$, we simply get a classical convergence of probability measures. 
Taking the Fourier transform, it is then enough to show
\begin{equation}
\operatorname{Tr} (e^{i \mu P_n/n} ) \underset{n \rightarrow \infty}{\longrightarrow}
 \sum_{\ell = 1}^p e^{i \mu x_{\ell}}, \label{convergencesp123}
 \end{equation}
for all $\mu \in \mathbb{R}$. 
Now, in the basis $( \xi_{[n]}^{(\ell)})_{1 \leq \ell \leq p}$ of $E_n$, the operator $P_n/n$ has matrix
$$\left( \frac{1}{n} \sum_{\ell = 1}^p x_{\ell} \langle \xi_{[n]}^{(\ell)} ,  \xi_{[n]}^{(m)} \rangle \right)_{1 \leq \ell, m \leq p} $$
By the law of large numbers, this matrix a.s. tends to $\operatorname{Diag} (x_1, \dots, x_p)$.
Applying the continuous map $M \mapsto \operatorname{Tr} (e^{i \mu M/n})$ from the $p \times p$ matrices to 
$\mathbb{C}$, we deduce \eqref{convergencesp123}.

Let $A$ be strictly  larger than the maximum of $(|x_{\ell}|)_{1 \leq \ell \leq p}$. 
From \eqref{convergencesp1}, all the eigenvalues of $P_n/n$ are almost surely in $[-A,A]$ for $n$ large enough. Let $I$ be an interval whose boundary does not contain a point $x_{\ell}$. In order to prove  \eqref{convergencesp2}, it is enough 
to prove it for $I \cap [-A,A]$ instead of $I$, and then one can assume that $I$ is bounded: let $y \leq z$ be the endpoints of $I$. For $\epsilon > 0$ small enough, $[y - \epsilon, y + \epsilon]$ and $[z-\epsilon, z + \epsilon]$ do not contain any points of $(x_{\ell})_{\ell \geq 1}$, and then (by \eqref{convergencesp1}) no eigenvalue of $P_n/n$ for $n$ large enough. 
We deduce that if $f$ is a real-valued and continuous function from $\mathbb{R}$ to $\mathbb{R}$, equal to $1$ on $[y,z]$ and equal to zero on $[-A,A] \backslash [y - \epsilon, z + \epsilon]$,
it is enough to check 
\begin{equation} \sum_{\lambda \in \operatorname{Spec} (P_n) } n (\Pi_{P_n, \lambda})_{a,b} f(\lambda/n)  \, 
\underset{n \rightarrow \infty}{\longrightarrow} \sum_{\ell = 1}^p  \xi^{(\ell)}_{a}  \overline{\xi^{(\ell)}_{b}}  f(x_{\ell}),
\label{convergenceff}
\end{equation}
since $f$ coincides with the indicator of $I$ at all points $x_{\ell}$ and all eigenvalues of $P_n/n$ for $n$ large enough. 

In fact, we will prove \eqref{convergenceff} for all continuous functions $f$. Let us first assume that $f$ is a polynomial. On the subspace $E_n$ of $\mathbb{C}^n$, 
$$\sum_{\lambda \in \operatorname{Spec} (P_n) } n \Pi_{P_n, \lambda}f(\lambda/n)
= n f(P_n /n).$$
On the orthogonal of $E_n$, the same sum is equal to zero, since $P_n$ is only defined on $E_n$. Hence, 
$$ \sum_{\lambda \in \operatorname{Spec} (P_n) } n (\Pi_{P_n, \lambda})_{a,b} f(\lambda/n)
= n [f(P_n/n) \Pi_{E_n}]_{a,b},$$
where $\Pi_{E_n}$ is the orthogonal projection on $E_n$. 
We have seen that $( \xi_{[n]}^{(\ell)})_{1 \leq \ell \leq p}$ is a basis of $E_n$: let  $(v_{p+1}, v_{p+2}, \dots, v_n)$ be 
an orthonormal basis of $E_n^{\perp}$. These bases taken together give a basis $\mathcal{Q}$ of $\mathbb{C}^n$. We have previously computed $P_n/n$ in the basis $( \xi_{[n]}^{(\ell)})_{1 \leq \ell \leq p}$. From this computation, we 
deduce that the matrix of $ n f(P_n/n) \Pi_{E_n}$ in the basis $\mathcal{Q}$ is
$$R :=  \operatorname{Diag} \left( n f \left( \left( \frac{1}{n} \sum_{\ell = 1}^p x_{\ell} \langle \xi_{[n]}^{(\ell)} ,  \xi_{[n]}^{(m)} \rangle \right)_{1 \leq \ell, m \leq p} \right), ( 0)_{p+1 \leq \ell, m \leq n} \right)$$
Hence, if $Q$ is the matrix whose columns form the basis $\mathcal{Q}$, we get 
$$ \sum_{\lambda \in \operatorname{Spec} (P_n) } n (\Pi_{P_n, \lambda})_{a,b} f(\lambda/n)  
= (QRQ^{-1})_{a,b}. $$

We know the coefficients of $Q$. In order to estimate the coefficients of $Q^{-1}$, we consider the matrix $S$ obtained by dividing the $p$ first columns $( \xi_{[n]}^{(\ell)})_{1 \leq \ell \leq p}$ of $Q$ by $\sqrt{n}$. 
By the law of large numbers, for $1 \leq \ell, m \leq p$, the inner product of the columns $\ell$ and $m$ of $S$ tends to $\delta_{\ell,m}$ when $n$ goes to infinity. Moreover, the columns of index larger than $p$ are orthogonal to the $p$ first columns. Hence, if we apply Gram-Schmidt orthonormalization to the columns of $S$, we multiply 
$S$ at the right by a matrix of the form $\operatorname{Diag}(T, I_{n-p})$, where $T$ is a $p \times p$ matrix tending to identity when $n$ goes to infinity. We have: 
\begin{align*}
Q^{-1} & =  (S \operatorname{Diag}( \sqrt{n} I_p, I_{n-p}) )^{-1} \\ & = 
 \operatorname{Diag}( n^{-1/2}  I_p, I_{n-p}) \operatorname{Diag}(T, I_{n-p})
[S\operatorname{Diag}(T, I_{n-p})]^{-1}
\end{align*}
Since the product $S\operatorname{Diag}(T, I_{n-p})$ is a unitary matrix, 
we deduce 
$$Q^{-1} = \operatorname{Diag}(n^{-1/2} T T^*, I_{n-p}) S^*
= \operatorname{Diag}(n^{-1} T T^*, I_{n-p}) Q^*,
$$
and 
$$QRQ^{-1} = Q    \operatorname{Diag} \left( V, ( 0)_{p+1 \leq \ell, m \leq n} \right)
Q^*$$
where
 $$V = f \left( \left( \frac{1}{n} \sum_{\ell = 1}^p x_{\ell} \langle \xi_{[n]}^{(\ell)} ,  \xi_{[n]}^{(m)} \rangle \right)_{1 \leq \ell, m \leq p} \right) T T^*.$$
Now, $V$ tends to $\operatorname{Diag}(f(x_1), \dots, f(x_p))$ when $n$ goes to infinity.
We deduce that the entry $a,b$ of $QRQ^{-1}$ tends to the right-hand side of \eqref{convergenceff}, which shows this convergence when $f$ is a polynomial. 

In particular, taking $f = 1$ and $a = b$, we get 
$$ \sum_{\lambda \in \operatorname{Spec} (P_n) } n (\Pi_{P_n, \lambda})_{a,a}   \, 
\underset{n \rightarrow \infty}{\longrightarrow} \sum_{\ell = 1}^p |  \xi^{(\ell)}_{a} |^2,$$
which shows in particular that the left-hand side of this convergence is bounded independently of $n$.
Moreover, since $\Pi_{P_n, \lambda}$ is a positive operator, we have 
$$|(\Pi_{P_n, \lambda})_{a,b}| \leq [(\Pi_{P_n, \lambda})_{a,a}(\Pi_{P_n, \lambda})_{b,b}]^{1/2} \leq 
\frac{1}{2} \left(  (\Pi_{P_n, \lambda})_{a,a}  +  (\Pi_{P_n, \lambda})_{b,b} \right)$$
Hence, for all $a, b  \geq 1$, 
$$ \sum_{\lambda \in \operatorname{Spec} (P_n) } n |(\Pi_{P_n, \lambda})_{a,b} | \leq M_{a,b}$$
independently of $n$, for some random $M_{a,b} > 0$. If we choose 
$$M_{a,b} >  \sum_{\ell = 1}^p |  \xi^{(\ell)}_{a} |  |  \xi^{(\ell)}_{b} |,$$
we deduce that in \eqref{convergenceff}, for $n$ large enough in order to have all the spectrum of $P_n/n$ in $[-A,A]$, 
changing a function $f$ by a function $g$ changes the two sides by at most $M_{a,b} \sup_{[-A,A]} |f - g|$. 
Since any continuous function can be uniformly approached by polynomials on compact sets, we deduce that \eqref{convergenceff} extends to all  continuous functions. 
\section{Preliminary bound on the operator norm}
In this section, we will prove some bound on the limiting operator norm of a matrix satisfying  \eqref{formulaOV}. 
This bound is a consequence of the results in \cite{OV96} and \cite{BO01}, however, we give an alternative proof here for sake of completeness. 
\begin{proposition}
Let $M$ be an infinite matrix satisfying \eqref{formulaOV}. Then, almost surely, 
$$\underset{n \rightarrow \infty}{\lim \sup} \frac{||M_n||}{n} \leq   \left(\sum_{\ell \geq 1} x_{\ell}^2 \right)^{1/2},$$
where $||M_n||$ is the operator norm of the $n \times n$ top-left minor of $M$. 
\end{proposition}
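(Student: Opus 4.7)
The plan is to separate $M_n$ into its three Gaussian contributions and handle each individually. Write
\[
M_n = \gamma_1 I_n + \sqrt{\gamma_2}\, G_n + N_n, \qquad N_n := \sum_{\ell \geq 1} x_\ell \bigl(\xi^{(\ell)}_{[n]}(\xi^{(\ell)}_{[n]})^* - I_n\bigr),
\]
where $\xi^{(\ell)}_{[n]} := (\xi^{(\ell)}_j)_{1 \leq j \leq n}$ and the sum defining $N_n$ converges entry-wise in $L^2$. The triangle inequality gives $\|M_n\|/n \leq |\gamma_1|/n + \sqrt{\gamma_2}\,\|G_n\|/n + \|N_n\|/n$. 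The first term tends to zero trivially, and the classical Wigner bound $\|G_n\| \sim 2\sqrt{n}$ almost surely kills the second. Hence it is enough to prove $\limsup_n \|N_n\|/n \leq (\sum_\ell x_\ell^2)^{1/2}$ almost surely.

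For this I would use the crude Hilbert--Schmidt bound $\|N_n\|^2 \leq \operatorname{Tr}(N_n^2)$ and establish the stronger trace estimate $\limsup_n \operatorname{Tr}(N_n^2)/n^2 \leq \sum_\ell x_\ell^2$ almost surely. Truncate the defining series at level $p$ as $N_n = N_n^{(p)} + R_n^{(p)}$, with $N_n^{(p)} := \sum_{\ell \leq p} x_\ell(\xi^{(\ell)}_{[n]}(\xi^{(\ell)}_{[n]})^* - I_n)$, and apply the Hilbert--Schmidt triangle inequality
\[
\sqrt{\operatorname{Tr}(N_n^2)} \;\leq\; \sqrt{\operatorname{Tr}((N_n^{(p)})^2)} + \sqrt{\operatorname{Tr}((R_n^{(p)})^2)}.
\]
Expanding the trace of $(N_n^{(p)})^2$ yields a finite sum of polynomials in the quantities $\|\xi^{(\ell)}_{[n]}\|^2/n$ and $\langle \xi^{(\ell)}_{[n]},\xi^{(m)}_{[n]}\rangle/n$, which by the law of large numbers converge almost surely to $1$ and, for $\ell \neq m$, to $0$; this gives $\operatorname{Tr}((N_n^{(p)})^2)/n^2 \to \sum_{\ell \leq p} x_\ell^2$ almost surely, in the spirit of the computation already used in Section~2.

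For the tail, an orthogonality computation using the independence of the $\xi^{(\ell)}$ across $\ell$ produces $\mathbb{E}\operatorname{Tr}((R_n^{(p)})^2) = n^2 \sum_{\ell > p} x_\ell^2$. To upgrade this mean estimate to an almost sure bound I would use that $\operatorname{Tr}((R_n^{(p)})^2) = \sum_{j,k \leq n} |R_n^{(p)}(j,k)|^2$ splits into $O(n^4)$ squared entries among which only the $O(n^3)$ pairs with overlapping indices are correlated; a higher-moment bound on this degree-$4$ polynomial in complex Gaussians (via Gaussian hypercontractivity, or explicit Wick expansion) combined with Chebyshev and Borel--Cantelli gives $\limsup_n \operatorname{Tr}((R_n^{(p)})^2)/n^2 \leq \sum_{\ell > p} x_\ell^2$ almost surely, for each fixed $p$. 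Since only countably many null sets are discarded as $p$ ranges over $\mathbb{N}$, letting $p \to \infty$ yields $\limsup_n \operatorname{Tr}(N_n^2)/n^2 \leq \sum_\ell x_\ell^2$ a.s., which combined with the Hilbert--Schmidt bound completes the proof.

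The main obstacle is the almost sure control of the tail $\operatorname{Tr}((R_n^{(p)})^2)/n^2$: the infinite sum over $\ell$ (convergent thanks to the square-summability hypothesis) is entangled with the sum over $n^2$ matrix entries, so a second-moment estimate alone is not summable in $n$ and one needs a stronger concentration input, which is why hypercontractivity of low-degree Gaussian polynomials (or an explicit fourth-moment calculation) enters the argument.
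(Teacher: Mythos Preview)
Your approach is correct and follows the same overall strategy as the paper: strip off the identity and GUE parts, bound $\|N_n\|^2 \leq \operatorname{Tr}(N_n^2)$, and control the trace via a higher-moment estimate plus Borel--Cantelli. Two differences are worth flagging.

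First, your truncation at level $p$ is an unnecessary detour. The paper works directly with the full infinite sum: it writes $\operatorname{Tr}(N_n^2) = n^2 \sum_\ell x_\ell^2 + (\text{remainder})$ and shows the remainder has fourth moment $O(n^6)$ by an explicit combinatorial expansion, giving $\operatorname{Tr}(N_n^2)/n^2 \to \sum_\ell x_\ell^2$ a.s.\ in one shot. The concentration argument you sketch for the tail $R_n^{(p)}$ is precisely this computation restricted to indices $\ell > p$; applied with $p=0$ it already yields the full result, so the split $N_n = N_n^{(p)} + R_n^{(p)}$ and the separate law-of-large-numbers treatment of $N_n^{(p)}$ buy nothing.

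Second, your suggestion to invoke Gaussian hypercontractivity is a genuine shortcut relative to the paper. Once one has $\operatorname{Var}\bigl(\operatorname{Tr}(N_n^2)\bigr) = O(n^3)$ from your correlated-pairs count (correct: entries $N_n(j,k)$ and $N_n(j',k')$ with $\{j,k\}\cap\{j',k'\}=\emptyset$ are functions of disjoint Gaussians, hence independent), the fact that $\operatorname{Tr}(N_n^2) - \mathbb{E}\operatorname{Tr}(N_n^2)$ lives in Wiener chaos of order at most four immediately yields the fourth centered moment $O(n^6)$. This replaces the rather lengthy explicit moment expansion the paper carries out over several pages.
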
 
\begin{remark}
From Theorem \ref{convergenceeigenvectors}, the upper limit is in fact a limit and is equal to the maximum of $(|x_{\ell}|)_{\ell \geq 1}$.
\end{remark}
\begin{proof}
Shifting by a fixed multiple of identity does not change the upper limit. The Gaussian part is also irrelevant since 
the operator norm of the Gaussian Unitary Ensemble is a.s. negligible with respect to $n$ (it is classical that it is a.s. $\mathcal{O}(\sqrt{n})$, one can prove  that it is a.s. $\mathcal{O}(n^{5/6 + \epsilon})$,  just by expanding and bounding
$\mathbb{E} [\operatorname{Tr}(M_n^6)]$, and applying Borel-Cantelli lemma). We can then assume 
$$m_{j,k} = \sum_{\ell \geq 1} x_{\ell}  (\xi_j^{(\ell)} \overline{\xi_k^{(\ell)}} - \delta_{j,k} ). $$
We have 
\begin{align*}
||M_n||^2 \leq \operatorname{Tr} (M_n^2) 
& = \sum_{1 \leq j,k  \leq n} |m_{j,k}|^2
\\ & = \sum_{p, q \geq 1}  x_p x_q \sum_{1 \leq j, k \leq n}  (\overline{\xi_j^{(p)}} \xi_k^{(p)} - \delta_{j,k} )
 (\xi_j^{(q)} \overline{\xi_k^{(q)}} - \delta_{j,k} ).
\end{align*}
Here, the last sum can be infinite. In this case, it is rigorously defined as the a.s. limit of the sum on $1 \leq p, q \leq r$, 
when $r$ goes to infinity. 
Hence, 
$$||M_n||^2 \leq  \sum_{p, q \geq 1}  x_p x_q ( | \langle \xi_{[n]}^{(p)}, \xi_{[n]}^{(q)} \rangle |^2 - 
||\xi_{[n]}^{(p)}||^2 - ||\xi_{[n]}^{(q)}||^2 + n ),$$
$$||M_n||^2 \leq n^2  \sum_{p \geq 1} x_p^2 
+ \sum_{p, q \geq 1}  x_p x_q ( | \langle \xi_{[n]}^{(p)}, \xi_{[n]}^{(q)} \rangle |^2 - n^2 \delta_{p,q} -
||\xi_{[n]}^{(p)}||^2 - ||\xi_{[n]}^{(q)}||^2 + n ).$$
It is then sufficient to show that almost surely
$$ \sum_{p, q \geq 1}  x_p x_q ( | \langle \xi_{[n]}^{(p)}, \xi_{[n]}^{(q)} \rangle |^2 - n^2 \delta_{p,q} -
||\xi_{[n]}^{(p)}||^2 - ||\xi_{[n]}^{(q)}||^2 + n ) = o(n^2),$$
which is guaranteed, by Borel-Cantelli lemma, by the estimate
$$\mathbb{E} \left[ \left( \sum_{p, q \geq 1}  x_p x_q ( | \langle \xi_{[n]}^{(p)}, \xi_{[n]}^{(q)} \rangle |^2 - n^2 \delta_{p,q} -
||\xi_{[n]}^{(p)}||^2 - ||\xi_{[n]}^{(q)}||^2 + n ) \right)^4 \right] = \mathcal{O} (n^6),$$
and then (using Fatou's lemma in the case of an infinite sum), by 
$$\sum_{p_1, q_1, p_2, q_2, p_3, q_3, p_4, q_4 \geq 1} 
\prod_{s =1}^4 (x_{p_s} x_{q_s} ) \dots
$$ $$ \dots \times
\mathbb{E} \left[ \prod_{s = 1}^4   ( | \langle \xi_{[n]}^{(p_s)}, \xi_{[n]}^{(q_s)} \rangle |^2  - n^2 \delta_{p_s,q_s} -
||\xi_{[n]}^{(p_s)}||^2 - ||\xi_{[n]}^{(q_s)}||^2 + n ) \right]= \mathcal{O}(n^6)$$
where, in the case of infinite sums, we take a lower limit of the sums on $p_1, q_1, \dots, p_4, q_4 \leq r$ when $r$ goes to infinity. 
Let us now estimate the expectations in the last equation. If one of the eight indices $p_s, q_s$ appears 
exactly once (say $p_1$), we can first condition on all the seven other $\xi^{(p_s)}_{[n]}$,  $\xi^{(q_s)}_{[n]}$. In the conditional expectation, three of  the factors are fixed. The conditional expectation of the last factor is 
$$\mathbb{E} \left[ | \langle \xi_{[n]}^{(p_1)}, \xi_{[n]}^{(q_1)} \rangle |^2  - n^2 \delta_{p_1,q_1} -
||\xi_{[n]}^{(p_1)}||^2 - ||\xi_{[n]}^{(q_1)}||^2 + n \;  \big|  \xi_{[n]}^{(q_1)} \right].$$
Since $p_1 \neq q_1$ by assumption, we have
\begin{align*} 
\mathbb{E} \left[ | \langle \xi_{[n]}^{(p_1)} , \xi_{[n]}^{(q_1)} \rangle |^2 \;  \big|  \xi_{[n]}^{(q_1)} \right]
& = \sum_{1 \leq j, k \leq n}   \xi_{j}^{(q_1)}  \overline{\xi_{k}^{(q_1)} } 
\mathbb{E} [\overline{\xi_{j}^{(p_1)} } \xi_k^{(p_1)} ] \\ & = 
 \sum_{1 \leq j, k \leq n}   \xi_{j}^{(q_1)}  \overline{\xi_{k}^{(q_1)} }  \delta_{j,k} 
= ||\xi_{[n]}^{(q_1)}||^2,
\end{align*}
$$n^2 \delta_{p_1,q_1}  = 0,$$ 
$$\mathbb{E} \left[ ||\xi_{[n]}^{(p_1)}||^2 \;  \big|  \xi_{[n]}^{(q_1)} \right] = n,$$
$$\mathbb{E} \left[ ||\xi_{[n]}^{(q_1)}||^2 \;  \big|  \xi_{[n]}^{(q_1)} \right] = ||\xi_{[n]}^{(q_1)}||^2,$$
and then 
$$\mathbb{E} \left[ | \langle \xi_{[n]}^{(p_1)}, \xi_{[n]}^{(q_1)} \rangle |^2  - n^2 \delta_{p_1,q_1} -
||\xi_{[n]}^{(p_1)}||^2 - ||\xi_{[n]}^{(q_1)}||^2 + n \;  \big|  \xi_{[n]}^{(q_1)} \right] = 0.$$
We deduce that 
$$\mathbb{E} \left[ \prod_{s = 1}^4   ( | \langle \xi_{[n]}^{(p_s)}, \xi_{[n]}^{(q_s)} \rangle |^2  - n^2 \delta_{p_s,q_s} -
||\xi_{[n]}^{(p_s)}||^2 - ||\xi_{[n]}^{(q_s)}||^2 + n ) \right] = 0$$
as soon as one of the eight indices $p_s, q_2$ appears only once. 
Using H\"older inequality, it is then enough to show: 
$$\sum'_{p_1, q_1, p_2, q_2, p_3, q_3, p_4, q_4 \geq 1} 
\prod_{s =1}^4 |x_{p_s} x_{q_s} |\dots
$$ $$ \dots \times
\prod_{s = 1}^4  \mathbb{E}  \left[ \left( | \langle \xi_{[n]}^{(p_s)}, \xi_{[n]}^{(q_s)} \rangle |^2  - n^2 \delta_{p_s,q_s} -||\xi_{[n]}^{(p_s)}||^2 - ||\xi_{[n]}^{(q_s)}||^2 + n  \right)^4 \right] ^{1/4}= \mathcal{O}(n^6),$$
where the prime means that we restrict the sum to the terms where each of the indices appears at least twice.
It is then enough to show that 
\begin{equation}
\sum'_{p_1, q_1, p_2, q_2, p_3, q_3, p_4, q_4 \geq 1} 
\prod_{s =1}^4 |x_{p_s} x_{q_s} | < \infty \label{sumx}
\end{equation}
and 
\begin{equation}
 \mathbb{E}  \left[ \left( | \langle \xi_{[n]}^{(p)}, \xi_{[n]}^{(q)} \rangle |^2  - n^2 \delta_{p,q} -||\xi_{[n]}^{(p)}||^2 - ||\xi_{[n]}^{(q)}||^2 + n  \right)^4 \right] = \mathcal{O}(n^6).  \label{momentxi}
\end{equation}
For the first estimate \eqref{sumx}, we observe that in order to choose $p_1, q_1, p_2, q_2, p_3, q_3, p_4, q_4$, we 
have to choose: 
\begin{itemize}
\item The different indices which appear. 
\item The number of times each index appears. 
\item The exact positions where they appear: given the two first items, the number of possibilities is uniformly bounded. 
\end{itemize}
Hence, 
\begin{align*}
\sum'_{p_1, q_1, p_2, q_2, p_3, q_3, p_4, q_4 \geq 1}  &
\prod_{s =1}^4 |x_{p_s} x_{q_s} | \ll \sum_{p \geq 1} x_p^8 
+ \sum_{p \neq q \geq 1} ( x_p^6 x_q^2 + |x_p|^5 |x_q|^3 + x_p^4 x_q^4)  
\\ &
+ \sum_{p \neq q \neq r \geq 1} (x_p^4 x_q^2 x_r^2 + |x_p|^3 |x_q|^3 x_r^2)
 + \sum_{p \neq q \neq r \neq s \geq 1} x_p^2 x_q^2 x_r^2 x_s^2,
\end{align*}
which implies the crude bound
$$\sum'_{p_1, q_1, p_2, q_2, p_3, q_3, p_4, q_4 \geq 1}  
\prod_{s =1}^4 |x_{p_s} x_{q_s} | \ll \prod_{j= 2}^8 (1 + \sum_{p \geq 1} |x_p|^j). $$

Now, for $j \geq 2$, 
$$\sum_{p \geq 1} |x_p|^j \leq \max_{p \geq 1} |x_p|^{j-2} \sum_{p \geq 1} |x_p|^2 
\leq  \left( \sum_{p \geq 1} |x_p|^{2} \right)^{1 + (j-2)/2} < \infty,$$
which proves \eqref{sumx}. 
Let us now prove \eqref{momentxi}. We have, using H\"older inequality:
$$\mathbb{E} [ ||\xi_{[n]}^{(p)} ||^8 ] 
= \sum_{1 \leq a,b,c,d \leq n} \mathbb{E} [ |\xi_{a}^{(p)}|^2 |\xi_{b}^{(p)}|^2 |\xi_{c}^{(p)}|^2
|\xi_{d}^{(p)}|^2 ]  \leq \sum_{1 \leq a,b,c,d \leq n} \mathbb{E} [ |\xi_{1}^{(p)}|^8] = 24 n^4.$$
Hence, it is enough to show 
$$\mathbb{E}  \left[ \left( | \langle \xi_{[n]}^{(p)}, \xi_{[n]}^{(q)} \rangle |^2  - n^2 \delta_{p,q}  \right)^4 \right] = \mathcal{O}(n^6). $$
If $p \neq q$, the left-hand side is, by using the fact that $\xi_{[n]}^{(p)}, \xi_{[n]}^{(q)}$ are independent with the same distribution as $\xi_{[n]}^{(1)}$,
$$\mathbb{E} [  | \langle \xi_{[n]}^{(p)}, \xi_{[n]}^{(q)} \rangle |^8] 
= \sum_{1 \leq a_1, \dots, a_8 \leq n} \left|  \mathbb{E} \left[   \prod_{s=1}^4 \overline{\xi_{a_s}^{(1)}} 
\prod_{s=5}^8 \xi_{a_s}^{(1)} \right] \right|^2.$$
If one of the eight indices appears only once, the last expectation is zero by rotational invariance of the law of $\xi_{a_s}^{(1)}$. Hence, for all non-zero terms, there are at most four different indices among $a_1, \dots, a_8$. We deduce 
$$\mathbb{E} [  | \langle \xi_{[n]}^{(p)}, \xi_{[n]}^{(q)} \rangle |^8] = \mathcal{O}(n^4),$$
which is more than we need. 
For $p = q$, we have to show
$$\mathbb{E} [ ||\xi_{[n]}^{(p)}||^{16}] - 4 n^2 \mathbb{E} [ ||\xi_{[n]}^{(p)}||^{12}] + 6 n^4
\mathbb{E} [ ||\xi_{[n]}^{(p)}||^{8}] - 4 n^6 \mathbb{E} [ ||\xi_{[n]}^{(p)}||^{4}] + n^8 = \mathcal{O}(n^6).$$
For all $2 \leq r \leq 8$, 
$$\mathbb{E} [ ||\xi_{[n]}^{(p)}||^{2r}] 
= \sum_{1 \leq j_1, \dots, j_r \leq n} \mathbb{E} \left[ \prod_{s = 1}^r |\xi_{j_s}^{(p)}|^2 \right].$$
The sum of the terms where all the $j_s$ are distinct is equal to 
$$n (n-1) \dots (n-r+1) = n^r - \frac{r(r-1)}{2} n^{r-1} + \mathcal{O} (n^{r-2}).$$
The sum of the terms where two of the $j_s$ are equal and the others are all distinct is 
$$ \frac{r(r-1)}{2} n (n-1) \dots (n-r+2)  \mathbb{E} [ |\xi_{1}^{(p)}|^2]^{r-2} \mathbb{E} [ |\xi_{1}^{(p)}|^4]
= r(r-1) n^{r-1} + \mathcal{O}(n^{r-2}).$$
The sum of all the terms with more coincidences is $\mathcal{O}(n^{r-2})$. 
Hence, 
$$\mathbb{E} [ ||\xi_{[n]}^{(p)}||^{2r}] = n^r + \frac{r(r-1)}{2} n^{r-1} + \mathcal{O} (n^{r-2}),$$
and 
$$\sum_{s =0}^4 (-1)^s {4 \choose s} n^{8 - 2s} \mathbb{E} [ ||\xi_{[n]}^{(p)}||^{4s}]
= \sum_{s =0}^4 (-1)^s {4 \choose s} (n^8 - s (2s-1) n^7) + \mathcal{O}(n^6),$$
which is $\mathcal{O}(n^6)$, since
$$\sum_{s =0}^4 (-1)^s {4 \choose s} = 1 - 4 + 6 - 4 + 1 = 0,$$
and 
$$\sum_{s =0}^4 (-1)^s {4 \choose s} s(2s-1) = 1 \cdot 0 - 4 \cdot 1 + 6 \cdot 6 - 4 \cdot 15 + 1 \cdot 28
= 0 -4 + 36 - 60 + 28 = 0.$$
\end{proof}
\section{Proof of Theorem \ref{convergenceeigenvectors} in the general case}
In the convergence in distribution given by the theorem, it is enough to test intervals of the form
$(-\infty, c]$, $(-\infty, c)$, for $c < 0$ not in the sequence $(x_{\ell})_{\ell \geq 1}$, and the intervals of the form $[c, \infty)$, $(c, \infty)$ for $c > 0$ not in the sequence $(x_{\ell})_{\ell \geq 1}$. By symmetry, we only consider positive intervals, we fix $c$ and we denote by $I$ one of the intervals $[c, \infty)$ and $(c, \infty)$.
We define $\delta > 0$ as the minimum distance between $c$ and a point of $\{0,(x_{\ell})_{\ell \geq 1}\}$. 
For $\epsilon \in (0,c)$, we can decompose the infinite matrix $M$ as $A + B$, where 
$A = (a_{j,k})_{j, k \geq 1}$, $B = (b_{j,k})_{j,k \geq 1}$, 
$$a_{j,k} = \sum_{|x_{\ell}| > \epsilon} ( x_{\ell} \xi_{j}^{(\ell)} \overline{ \xi_{k}^{(\ell)} } - \delta_{j,k}),$$
$$b_{j,k} = \gamma_1 \delta_{j,k} + \sqrt{\gamma_2} G_{j,k} + \sum_{|x_{\ell}| \leq \epsilon}  ( x_{\ell} \xi_{j}^{(\ell)} \overline{ \xi_{k}^{(\ell)} } - \delta_{j,k}).$$
If $A_n$ and $B_n$ are the top-left $n \times n$ minors of $A$ and $B$, the preliminary bound shows that 
almost surely, for $n$ large enough, 
$$\frac{||B_n||}{n} \leq  \left[ \left(\sum_{|x_{\ell}| \leq \epsilon} x_{\ell}^2 \right)^{1/2} + \epsilon \right],$$
which, by dominated convergence, tends to zero with $\epsilon$. Hence, if we take $\epsilon \in (0,c)$ small enough, we can assume that almost surely, $||B_n|| \leq n \delta/2$ for $n$ large enough. 

We have first to show that the number of eigenvalues of $M_n/n$ which are strictly larger than $c$ (resp. larger than or equal to $c$) tends to the number $r$ of points of $(x_{\ell})_{\ell \geq 1}$ which are larger than $c$ (resp. larger than or equal to $c$). Now, from the finite case studied before, the $r$-th largest eigenvalue of $A_n/n$ tends to the $r$-th largest point 
of $(x_{\ell})_{\ell \geq 1}$,  which is at least $c+ \delta$ (recall that there is no point in the sequence in $(c-\delta, c+ \delta)$ by definition of $\delta$). Since $||B_n|| \leq n \delta/2$ for $n$ large, the lower limit of the $r$-th largest eigenvalue of $M_n/n$ is at least $c + \delta/2$. Similarly, the upper limit of the $(r+1)$-th largest eigenvalue of $M_n/n$ is 
at most $c - \delta/2$. Hence, for $n$ large enough, there are exactly $r$ eigenvalues of $M_n/n$ which are strictly larger than $c$ (resp. larger than or equal to $c$). 

It now remains to show that 
$$n (\Pi_{M_n, n I})_{a,b} \underset{n \rightarrow \infty}{\longrightarrow} \sum_{x_{\ell} \in I} \xi_a^{(\ell)} \overline{\xi_{b}^{(\ell)}},$$
where $\Pi_{M_n, n I}$ is the projection on the space $\mathcal{E}$ generated by the eigenvectors of $M_n$ associated to eigenvalues in $nI$ (recall that $I = [c, \infty)$ or $I = (c, \infty)$). 
By the study of the finite case, it is known that with obvious notation, 
$$n (\Pi_{A_n, n I})_{a,b} \underset{n \rightarrow \infty}{\longrightarrow} \sum_{x_{\ell} \in I} \xi_a^{(\ell)} \overline{\xi_{b}^{(\ell)}}.$$
Hence, if $\mathcal{F}$ is the space generated by the eigenvectors of $A_n$  associated to eigenvalues in $nI$, it is enough 
to show that 
$$n | (\Pi_{\mathcal{E}})_{a,b} -  (\Pi_{\mathcal{F}})_{a,b}|  \underset{n \rightarrow \infty}{\longrightarrow}  0.$$
Let $v$ be a unit eigenvector, corresponding to an eigenvalue $\lambda \in nI$ of $M_n$. We have a decomposition 
$v = w + x$, where $w \in \mathcal{F}$ and $x$ is orthogonal to $\mathcal{F}$. 
We have
$$(A_n + B_n) (w + x) = \lambda (w + x)$$
and then, taking the inner product with $x$: 
$$\langle x , A_n w \rangle + \langle x , B_n w \rangle + 
\langle x , A_n x \rangle + \langle x , B_n x \rangle = \lambda( \langle x, w \rangle + \langle x, x \rangle )
= \lambda ||x||^2 \geq nc ||x||^2.$$
The space $\mathcal{F}$ is stable by $A_n$, so 
$$\langle x , A_n w \rangle  = 0.$$
Since $x$ is orthogonal to $\mathcal{F}$, we have 
$$\langle x , A_n x \rangle \leq ||x||^2 \lambda_{n,(nI)^c},$$
where $\lambda_{n,(nI)^c}$ is the largest eigenvalue of $A_n$ in the complement of $nI$. 
Now, by the eigenvalue convergence in the finite case,  $n^{-1} \lambda_{n,(nI)^c}$ tends to the largest point of 
 $\{0, (x_{\ell})_{\ell \geq 1, 
x_{\ell} > \epsilon}\} \cap [0,c]$, and then it is smaller than $c - 3\delta/4$ for $n$ large enough, independently of the choice of the vector $v$. Hence, 
$$\langle x , A_n x \rangle \leq n (c - 3\delta/4) ||x||^2.$$
Moreover, for $n$ large enough (independently of $v$), 
$$\langle x, B_n x \rangle \leq  ||B_n|| \, ||x||^2 \leq n( \delta/2) ||x||^2.$$
Hence, we have 
$$\langle x , B_n w \rangle +  n (c - 3\delta/4) ||x||^2 + n (\delta/2) ||x||^2 \geq nc ||x||^2,$$
and
$$n \delta ||x||^2 / 4 \leq  |\langle x , B_n w \rangle| \leq ||x|| \, ||B_n w||,$$
$$||x|| \leq \frac{4}{n \delta} ||B_n w||.$$
Let $\mathcal{B} = \{y_1, \dots y_s\}$ be an orthonormal basis of $\mathcal{F}$, chosen as a measurable function of $A_n$. If $s < r$, we arbitrarily define $y_{s+1}= y_{s+2} = \dots = y_r = y_s$. 
Since $||w|| \leq ||v|| = 1$, by decomposing $w$ in the basis $\mathcal{B}$ and applying triangle inequality: 
$$||x|| \leq \frac{4}{n \delta} \sum_{j=1}^s  ||B_n y_j||. $$
Since the number of eigenvalues of $A_n$ in $nI$ is almost surely equal to $r$ for $n$ large enough, we get 
$$||x|| \leq \frac{4}{n \delta} \sum_{j=1}^r  ||B_n y_j||$$
for $n$ large enough. 
Let $U_j$ be a unitary matrix, chosen as a measurable function of $A_n$, such that $U_j y_j$ is  the first basis vector 
 $e$ of $\mathbb{C}^n$. By construction, $A_n$ and $B_n$ are independent. Since the law of $B_n$ is invariant by conjugation, $B_n$ has the same law as $U_j^{-1} B_n U_j$, conditionally on $A_n$, and $(B_n,y_j)$ has the same law as 
$(U_j^{-1} B_n U_j, y_j)$. Hence, we have the equality in distribution: 
$$||B_n y_j|| \overset{d}{=} ||U_j^{-1} B_n U_j y_j|| = ||B_n e||$$
Now, let us estimate the tail of the distribution of $||B_n e||$, by looking at its fourth moment.
We have 
$$\mathbb{E} [ ||B_n e||^4] \ll \mathbb{E} [ ||\gamma_1 e||^4] 
+  \mathbb{E} [ ||G_n e||^4] +  \mathbb{E} [ ||C_n e||^4],$$
where, with obvious notation, 
$$g_{j,k} = \sqrt{\gamma_2} G_{j,k},$$
$$c_{j,k} = \sum_{|x_{\ell}| \leq \epsilon} (x_{\ell} \xi_j^{(\ell)} \overline{\xi_k^{(\ell)} } - \delta_{j,k} ).$$
The first term is independent of $n$ (equal to $\gamma_1^4$). The second term is the $L^4$ norm of a Gaussian vector, easily dominated by $n^2$. The third term is 
$$\mathbb{E} \left[ \left( \sum_{j = 1}^n \left| \sum_{|x_{\ell}| \leq \epsilon} x_{\ell} ( \xi_j^{(\ell)}\overline{\xi_1^{(\ell)} } - \delta_{j,1} )
\right|^2 \right)^2 \right]$$
Using Fatou's lemma in the case where $(x_{\ell})_{\ell \geq 1}$ is infinite, we get that this expectation is bounded by 
$$ \sum_{|x_\ell|, |x_m|, |x_p|, |x_q| \leq \epsilon}  x_{\ell} x_m x_p x_q \sum_{1 \leq j, k \leq n}
\mathbb{E} \left[ ( \xi_j^{(\ell)}\overline{\xi_1^{(\ell)} } - \delta_{j,1} ) ( \xi_1^{(m)}\overline{\xi_j^{(m)} } - \delta_{j,1} ) \right. $$ $$ \left. \dots 
( \xi_k^{(p)}\overline{\xi_1^{(p)} } - \delta_{k,1} ) ( \xi_1^{(q)}\overline{\xi_k^{(q)} } - \delta_{k,1} ) \right],$$
where in the case of an infinite sequence, we restrict the sum to $1 \leq \ell, m, p, q \leq t$ and let $t \rightarrow \infty$. 
If one of the indices $\ell, m, p, q$, say $\ell$, is different from the three others,
we can use independence in the last expectation, in order to get a factor $\mathbb{E} [ \xi_{j}^{(\ell)} \overline{\xi_1^{(\ell)}}  - \delta_{j,1}] = 0 $. Hence, the only non-zero terms in the sum correspond to the case where the indices  $\ell, m, p, q$ are pairwise equal. Since the last expectation is uniformly bounded in any case, we deduce that the 
sum is bounded by a universal constant times $n^2 \left(\sum_{|x_{\ell}| \leq \epsilon} x_{\ell}^2 \right)^2.$
We have then proven 
$$\mathbb{E} [ ||B_n y_j||^4] =  \mathbb{E} [  ||B_n e||^4] \ll n^2,$$
and by Borel-Cantelli lemma, for all $j \in \{1, \dots, r\}$ and for all $n$ large enough, 
 $||B_n y_j|| \leq (\delta/(1 + 4r)) n^{4/5}$. 
Hence, almost surely, $||x|| \leq n^{-1/5}$ for $n$ large enough, uniformly on the choice of the unit  eigenvector $v$ in $\mathcal{E}$. In other words, almost surely, for $n$ large enough,  all unit eigenvectors of $M_n$ in $\mathcal{E}$ are at distance at most $n^{-1/5}$ from a vector in $\mathcal{F}$. 
For $n$ large, $\mathcal{E}$ and $\mathcal{F}$ have dimension $r$. If the eigenvectors $v_1, \dots, v_r$ of $M_n$ form an orthonormal basis of $\mathcal{E}$, we have vectors in $\mathcal{F}$ of the form $v_j + \mathcal{O}(n^{-1/5})$. 
The inner product of $v_j + \mathcal{O}(n^{-1/5})$ with $v_k + \mathcal{O}(n^{-1/5})$ is $\delta_{j,k}  + \mathcal{O}(n^{-1/5})$.
Applying Gram-Schmidt orthogonalization, we deduce that for $n$ large, one gets an orthonormal  basis of $\mathcal{F}$ of the form $(v_j + \mathcal{O}(n^{-1/5}))_{1 \leq j \leq r}$. Hence, for any unit vector $x$, and $n$ large, 
$$\Pi_{\mathcal{E}}(x) - \Pi_{\mathcal{F}} (x) 
= \sum_{j=1}^r  \langle v_j , x \rangle v_j  - \sum_{j=1}^r  \langle (v_j + \mathcal{O}(n^{-1/5}))  , x \rangle 
(v_j + \mathcal{O}(n^{-1/5})),$$
which implies that the operator norm of $\Pi_{\mathcal{E}} - \Pi_{\mathcal{F}}$ are a.s. dominated by $n^{-1/5}$.
On the other hand, since $A_n$ and $B_n$ are independent and unitarily invariant, the couple 
$(A_n, B_n)$ has the same law as $(U A_n U^{-1}, U B_n U^{-1})$, for all deterministic $U \in U(n)$. Now, simultaneous conjugation of $A_n$ and $B_n$ changes the spaces $(\mathcal{E},\mathcal{F})$ to their images by $U$. Hence, 
$(\mathcal{E},\mathcal{F})$  has the same law as $(U \mathcal{E}, U\mathcal{F})$, and 
$\Pi_{\mathcal{E}} - \Pi_{\mathcal{F}}$ is invariant by unitary conjugation. We deduce that there is an equality in distribution of the form 
$$\Pi_{\mathcal{E}} - \Pi_{\mathcal{F}} \overset{d}{=} U \Lambda U^{-1},$$
where $\Lambda$ is a random diagonal matrix whose entries have nonincreasing modulus, and $U = (u_{j,k})_{1 \leq j, k \leq n}$ is an independent, Haar-distributed matrix in $U(n)$. 
Since $\mathcal{E}$ and $\mathcal{F}$ have dimension $r$, $\Pi_{\mathcal{E}} - \Pi_{\mathcal{F}}$ has at most rank $2r$, and then only the $2r$ first entries of $\Lambda$ can be different from zero. 
We get (for $n$ large) 
$$|(\Pi_{\mathcal{E}} - \Pi_{\mathcal{F}})_{a,b}| = \left| \sum_{j = 1}^{2r} \Lambda_j u_{a,j} \overline{u_{b,j}}
\right| \leq 2r ||\Pi_{\mathcal{E}} - \Pi_{\mathcal{F}}|| \sup_{1 \leq j,k \leq n} |u_{j,k}|^2.$$ 
Now, $|u_{j,k}|^2$ is a Beta variable of parameters $1$ and $n-1$, which implies 
$$\mathbb{P} [ |u_{j,k}|^2 \geq n^{-0.99} ] 
= (n-1) \int_{n^{-0.99}}^1 (1-x)^{n-2} dx = (1 - n^{-0.99} )^{n-1} \ll e^{- n^{0.01} }.$$
Using Borel-Cantelli lemma, and the previous estimate on $||\Pi_{\mathcal{E}} - \Pi_{\mathcal{F}}|| $, we deduce that almost surely, 
$$|(\Pi_{\mathcal{E}} - \Pi_{\mathcal{F}})_{a,b}|  = \mathcal{O} (n^{-1.19}),$$
which completes the proof of Theorem \ref{convergenceeigenvectors}.

\bibliographystyle{halpha}


\begin{thebibliography}{halpha}

\bibitem[A81]{A81}
D.~J. Aldous.
\newblock Representations for partially exchangeable arrays of random variables.
\newblock {\em J. of Mult. Analysis}, (11):591--598, 1981.

\bibitem[BNN13]{BNN13}
P.~Bourgade, J.~Najnudel, and A.~Nikeghbali.
\newblock A unitary extension of virtual permutations.
\newblock {\em Int. Math. Res. Not. IMRN}, (18):4101--4134, 2013.

\bibitem[BO01]{BO01}
A.~Borodin and G.~Olshanski.
\newblock Infinite random matrices and ergodic measures.
\newblock {\em Comm. Math. Phys.}, 223(1):87--123, 2001.

\bibitem[KOV93]{KOV93}
S.~Kerov, G.~Olshanski, and A.~Vershik.
\newblock Harmonic analysis on the infinite symmetric group. {A} deformation of
  the regular representation.
\newblock {\em C. R. Acad. Sci. Paris S\'er. I Math.}, 316(8):773--778, 1993.

\bibitem[MNN13]{MNN13}
K.~{Maples}, J.~{Najnudel}, and N.~{Nikeghbali}.
\newblock {Limit operators for circular ensembles}.
\newblock {\em ArXiv e-prints}, 4 2013, 1304.3757.

\bibitem[MNN18]{MNN18}
K.~{Maples}, J.~{Najnudel}, and N.~{Nikeghbali}.
\newblock {Strong convergence of eigenangles and eigenvectors for the
  {C}ircular {U}unitary {E}nsemble}.
\newblock {\em To appear in Annals. of Probab.}, 4 2018.

\bibitem[Mon73]{Montgomery73}
H.-L. Montgomery.
\newblock The pair correlation of zeros of the zeta function.
\newblock pages 181--193, 1973.

\bibitem[Ner02]{Neretin02}
Y.~A. Neretin.
\newblock Hua-type integrals over unitary groups and over projective limits of
  unitary groups.
\newblock {\em Duke Math. J.}, 114(2):239--266, 2002.

\bibitem[OV96]{OV96}
G.~Olshanski and A.~Vershik.
\newblock Ergodic unitarily invariant measures on the space of infinite
  {H}ermitian matrices.
\newblock In {\em Contemporary mathematical physics}, volume 175 of {\em Amer.
  Math. Soc. Transl. Ser. 2}, pages 137--175. Amer. Math. Soc., Providence, RI,
  1996.

\bibitem[Pic91]{Pic91}
D.~Pickrell.
\newblock Mackey analysis of infinite classical motion groups.
\newblock {\em Pacific J. Math.}, 150:139--166, 1991.

\bibitem[Qiu17]{Qiu17}
Y.~Qiu.
\newblock Infinite random matrices \& ergodic decomposition of finite and
  infinite {H}ua-{P}ickrell measures.
\newblock {\em Adv. Math.}, 308:1209--1268, 2017.

\bibitem[RS96]{RS96}
Z.~Rudnick and P.~Sarnak.
\newblock Zeros of principal {$L$}-functions and random matrix theory.
\newblock {\em Duke Math. J.}, 81(2):269--322, 1996.
\newblock A celebration of John F. Nash, Jr.

\bibitem[Tis98]{Tsilevich98}
A.-V. Tislevich.
\newblock Stationary measures on the space of virtual permutations for an
  action of the infinite symmetric group.
\newblock 1998.

\bibitem[VV17]{VV17}
B.~Valk\'o and B.~Vir\'ag.
\newblock The {$\rm Sine_\beta$} operator.
\newblock {\em Invent. Math.}, 209(1):275--327, 2017.

\end{thebibliography}
\end{document}